\newtheorem{thm}{Theorem}
\newtheorem{prop}[thm]{Proposition}
\newtheorem{lem}[thm]{Lemma}
\theoremstyle{remark}
\newtheorem{rem}[thm]{Remark}
\newcommand{\FF}{\mathbb{F}}
\newcommand{\ZZ}{\mathbb{Z}}
\newcommand{\RR}{\mathbb{R}}
\newcommand{\cB}{\mathcal{B}}
\DeclareMathOperator{\wt}{wt}
\DeclareMathOperator{\supp}{supp}
\begin{document}

\title{Some restrictions on weight enumerators of 
singly even self-dual codes II}

\author{
Masaaki Harada\thanks{
Corresponding author.
Research Center for Pure and Applied Mathematics,
Graduate School of Information Sciences,
Tohoku University, Sendai 980--8579, Japan.}
and 
Akihiro Munemasa\thanks{
Research Center for Pure and Applied Mathematics,
Graduate School of Information Sciences,
Tohoku University,
Sendai 980--8579, Japan.}
}

\maketitle

\begin{abstract}
In this note,
we give some restrictions on the number of vectors
of weight $d/2+1$ in the shadow of a singly even
self-dual $[n,n/2,d]$ code.
This eliminates some of the possible weight enumerators 
of singly even self-dual $[n,n/2,d]$ codes
for $(n,d)=(62,12)$, $(72,14)$, $(82,16)$, $(90,16)$ and $(100,18)$.
\end{abstract}

\smallskip
\noindent
{\bf Keywords:} self-dual code, weight enumerator, shadow

\smallskip
\noindent
{\bf Mathematics Subject Classification:} 94B05

\section{Introduction}

Let $C$ be a singly even self-dual code and
let $C_0$ denote the
subcode of codewords having weight $\equiv0\pmod4$.
Then $C_0$ is a subcode of codimension $1$.
The {\em shadow} $S$ of $C$ is defined to be $C_0^\perp \setminus C$.
Shadows for self-dual codes were introduced by Conway and
Sloane~\cite{C-S}
in order to 
derive new upper bounds for the minimum weight of
singly even self-dual codes, and to
provide restrictions on the weight enumerators of 
singly even self-dual codes.
The largest possible minimum weights of
singly even self-dual codes of lengths $n \le 72$ were
given in~\cite[Table~I]{C-S}.
The work was extended to lengths $74 \le n \le 100$ in~\cite[Table~VI]{DGH}.
We denote by $d(n)$ the largest possible minimum weight given 
in~\cite[Table~I]{C-S} and \cite[Table~VI]{DGH}
throughout this note.
The possible weight enumerators of singly
even self-dual codes having minimum weight $d(n)$
were also given in~\cite{C-S}
for lengths $n \le 64$ and $n=72$ (see 
also~\cite{DGH} for length $72$), and the work was
extended to lengths up to $100$ in~\cite{DGH}.
It is a fundamental problem to find which weight enumerators
actually occur among the possible weight enumerators 
(see~\cite{C-S} and \cite{Huffman}).

Some restrictions on the number of vectors
of weight $d/2$ in the shadow of a singly even
self-dual $[n,n/2,d]$ code were given in~\cite{HM}.
Also, some restrictions on the number of vectors
of weight $d/2+1$ in the shadow of a singly even
self-dual $[n,n/2,d]$ code were given in~\cite{BHM}
for $n \equiv 0 \pmod 4$.
In this note,
we improve the result in~\cite{BHM} about
the restriction on the number of vectors
of weight $d/2+1$ in the shadow of a singly even
self-dual $[n,n/2,d]$ code for $n \equiv 0 \pmod 4$.
We also give a restriction on the number of vectors
of weight $d/2+1$ in the shadow of a singly even
self-dual $[n,n/2,d]$ code for $n \equiv 2 \pmod 4$.
These restrictions
eliminate some of the possible weight enumerators determined 
in~\cite{C-S} and \cite{DGH}
for the parameters
$(n,d)=(62,12)$, $(72,14)$, $(82,16)$, $(90,16)$ and $(100,18)$

\section{Preliminaries}

A (binary) $[n,k]$ {\em code} $C$ is a $k$-dimensional vector subspace
of $\FF_2^n$,
where $\FF_2$ denotes the finite field of order $2$.
All codes in this note are binary.
The parameter $n$ is called the {\em length} of $C$.
The {\em weight} $\wt(x)$ of a vector $x \in \FF_2^n$ is
the number of non-zero components of $x$.
A vector of $C$ is a {\em codeword} of $C$.
The minimum non-zero weight of all codewords in $C$ is called
the {\em minimum weight} $d(C)$ of $C$ and an $[n,k]$ code with minimum
weight $d$ is called an $[n,k,d]$ code.
The \textit{dual code} $C^{\perp}$ of a code
$C$ of length $n$ is defined as
$
C^{\perp}=
\{x \in \FF_2^n \mid x \cdot y = 0 \text{ for all } y \in C\},
$
where $x \cdot y$ is the standard inner product.
A code $C$ is called \textit{self-dual} if $C = C^{\perp}$.
A self-dual code $C$ is {\em doubly even} if all
codewords of $C$ have weight divisible by four, and 
{\em singly even} if there exists at least one codeword of 
weight $\equiv 2 \pmod 4$.
Rains~\cite{Rains} showed that
the minimum weight $d$ of a self-dual code $C$ of length $n$
is bounded by
$d  \le 4 \lfloor{\frac {n}{24}} \rfloor + 6$
if $n \equiv 22 \pmod {24}$,
$d  \le 4  \lfloor{\frac {n}{24}} \rfloor + 4$
otherwise.
In addition, if $n \equiv 0 \pmod{24}$ and $C$ is singly even,
then $d  \le 4  \lfloor{\frac {n}{24}} \rfloor + 2$.
A self-dual code meeting the bound is called  {\em extremal}.
Let $A_i$ and $B_i$ be the numbers of vectors of
weight $i$ in $C$ and $S$, respectively.
The weight enumerators of $C$ and $S$ are given by
$\sum_{i=0}^n A_i y^i$
and 
$\sum_{i=d(S)}^{n-d(S)} B_i y^i$, respectively,
where $d(S)$ denotes the minimum weight of $S$.

Let $C$ be a singly even
self-dual code of length $n$ and let $S$ be the
shadow of $C$.
Let $C_0$ denote the subcode of codewords having weight 
$\equiv0\pmod4$.
There are cosets $C_1,C_2,C_3$ of $C_0$ such that
$C_0^\perp = C_0 \cup C_1 \cup C_2 \cup C_3 $, where
$C = C_0  \cup C_2$ and $S = C_1 \cup C_3$.

\begin{lem}[Conway and Sloane~\cite{C-S}]\label{lem:C-S}
Let $x_1,y_1$ be vectors of $C_1$ and 
let $x_3$ be a vector of $C_3$.
Then
$x_1+y_1 \in C_0$,
$x_1+x_3 \in C_2$ and 
$\wt(x_1) \equiv \wt(x_3) \equiv \frac{n}{2} \pmod 4$.
\end{lem}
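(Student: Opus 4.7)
My plan is to dispatch the three assertions in order, with the substantive work reserved for the weight congruence at the end. The first two assertions are pure coset arithmetic in $C_0^\perp / C_0$. Since $\dim C_0 = n/2 - 1$ and $\dim C_0^\perp = n/2 + 1$, this quotient is a two-dimensional $\FF_2$-space whose four elements are $C_0, C_1, C_2, C_3$. Hence $C_1 + C_1 = C_0$, giving $x_1 + y_1 \in C_0$; and $C_1 + C_3$ must be the unique nontrivial coset distinct from both $C_1$ and $C_3$, namely $C_2$, giving $x_1 + x_3 \in C_2$.

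Next I would show that $\wt$ is constant modulo $4$ on $C_1$ (and symmetrically on $C_3$). Fix $x, y \in C_1$. Since $x + y \in C_0$ and $x \in C_0^\perp$, we have $x \cdot (x + y) = 0$, hence $x \cdot y \equiv x \cdot x = \wt(x) \pmod 2$. Combined with the identity
$$\wt(x + y) \equiv \wt(x) + \wt(y) - 2(x \cdot y) \pmod 4$$
(where $x \cdot y$ is viewed as an integer $0$ or $1$) and the fact that $\wt(x + y) \equiv 0 \pmod 4$, this forces $\wt(y) \equiv \wt(x) \pmod 4$. So there exist constants $w_1, w_3 \in \ZZ/4\ZZ$ with $\wt(v) \equiv w_j \pmod 4$ for every $v \in C_j$, $j = 1, 3$.

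The main obstacle is identifying $w_1 = w_3 = n/2$ in $\ZZ/4\ZZ$. My approach is a Gauss-sum computation via MacWilliams. I would evaluate the sum $W_{C_0^\perp}(1, i) = \sum_{v \in C_0^\perp} i^{\wt(v)}$ in two ways. Partitioning $C_0^\perp$ into the four cosets, each of cardinality $2^{n/2 - 1}$, gives $2^{n/2 - 1}(1 + i^2 + i^{w_1} + i^{w_3}) = 2^{n/2 - 1}(i^{w_1} + i^{w_3})$. On the other hand, MacWilliams applied to $C_0$ yields $W_{C_0^\perp}(1, i) = |C_0|^{-1} W_{C_0}(1+i, 1-i)$, and each $c \in C_0$ contributes $(1+i)^{n - \wt(c)}(1-i)^{\wt(c)}$, which collapses to the single value $2^{n/2} i^{n/2}$ thanks to $\wt(c) \equiv 0 \pmod 4$. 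Equating the two expressions gives $i^{w_1} + i^{w_3} = 2 i^{n/2}$, and the triangle inequality then forces $i^{w_1} = i^{w_3} = i^{n/2}$, i.e., $w_1 \equiv w_3 \equiv n/2 \pmod 4$.
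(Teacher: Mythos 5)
Your proof is correct. The paper itself gives no proof of this lemma---it is quoted with a citation to Conway and Sloane---so there is nothing internal to compare against; your argument (coset arithmetic in the Klein four group $C_0^\perp/C_0$, the identity $\wt(x+y)\equiv\wt(x)+\wt(y)-2(x\cdot y)\pmod 4$ to get constancy of the weight class on each coset, and the evaluation of $\sum_{v\in C_0^\perp} i^{\wt(v)}$ via MacWilliams applied to the doubly even subcode to pin down that class as $n/2$) is essentially the standard derivation from the cited source, and every step checks out.
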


\begin{lem}[Brualdi and Pless~\cite{BP}]\label{lem:BP}
Let $x_1,y_1$ be vectors of $C_1$ and 
let $x_3$ be a vector of $C_3$.
\begin{enumerate}
\renewcommand{\labelenumi}{\rm \arabic{enumi})}
\item Suppose that $n \equiv 0 \pmod 4$.
Then $x_1 \cdot y_1=0$ and $x_1 \cdot x_3=1$.
\item Suppose that $n \equiv 2 \pmod 4$.
Then $x_1 \cdot y_1=1$ and $x_1 \cdot x_3=0$.
\end{enumerate}
\end{lem}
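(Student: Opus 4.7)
My plan is to reduce both parts of the lemma to a single modular calculation using the standard relation between the binary inner product and weights. Recall that for any $u,v\in\FF_2^n$ we have $\wt(u+v)=\wt(u)+\wt(v)-2N(u,v)$, where $N(u,v)$ is the number of coordinates where $u$ and $v$ are both $1$, and $u\cdot v\equiv N(u,v)\pmod 2$. Hence the key identity
\[
u\cdot v\equiv \frac{\wt(u)+\wt(v)-\wt(u+v)}{2}\pmod 2
\]
holds whenever the numerator is an even integer, which will always be the case here.

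The plan is to apply this identity with $(u,v)=(x_1,y_1)$ and with $(u,v)=(x_1,x_3)$, feeding in the weight information provided by Lemma~\ref{lem:C-S}. From that lemma we know $\wt(x_1),\wt(y_1),\wt(x_3)\equiv n/2\pmod 4$, while $x_1+y_1\in C_0$ gives $\wt(x_1+y_1)\equiv 0\pmod 4$ and $x_1+x_3\in C_2$ gives $\wt(x_1+x_3)\equiv 2\pmod 4$. Writing $\wt(x_1)=n/2+4a$, $\wt(y_1)=n/2+4b$, $\wt(x_1+y_1)=4c$ and substituting gives
\[
x_1\cdot y_1\equiv \frac{n+4(a+b-c)}{2}\equiv \frac{n}{2}\pmod 2,
\]
and similarly, writing $\wt(x_3)=n/2+4b'$ and $\wt(x_1+x_3)=2+4c'$,
\[
x_1\cdot x_3\equiv \frac{n-2+4(a+b'-c')}{2}\equiv \frac{n}{2}+1\pmod 2.
\]

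Splitting into the two congruence classes of $n\pmod 4$ then finishes the lemma: when $n\equiv 0\pmod 4$ the quantity $n/2$ is even, so $x_1\cdot y_1=0$ and $x_1\cdot x_3=1$; when $n\equiv 2\pmod 4$ the quantity $n/2$ is odd, so $x_1\cdot y_1=1$ and $x_1\cdot x_3=0$. There is no real obstacle here, the proof is a direct bookkeeping exercise; the only point requiring a moment of care is verifying that the numerator in the inner-product--weight identity is always a nonnegative even integer so that the reduction mod $2$ makes sense, but this is automatic once one works with the integer values of $\wt$.
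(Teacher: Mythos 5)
Your proof is correct. The paper itself gives no proof of this lemma (it is quoted from Brualdi and Pless~\cite{BP}), but your derivation---combining the identity $\wt(u+v)=\wt(u)+\wt(v)-2\,|\supp(u)\cap\supp(v)|$ with the weight congruences and coset memberships supplied by Lemma~\ref{lem:C-S}---is exactly the standard argument, and the bookkeeping checks out in all four cases.
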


\section{$n \equiv 2 \pmod 4$ and $d(S)=\frac{d(C)}{2}+1$}
\label{sec:2mod4}

Recall that the Johnson graph $J(v,d)$ has the collection $X$ of all
$d$-subsets of $\{1,2,\dots,v\}$ as vertices, and two distinct
vertices are adjacent whenever they share $d-1$ elements in common.
Assume $v\geq2d$ and set
\[R_i=\{(x,y)\in X\times X\mid |x\cap y|=d-i\}.\]
Then $\{R_i\}_{i=0}^d$ is a partition of $X\times X$.
The following lemma is known as Delsarte's inequalities since
it is the basis of Delsarte's linear programming bound.
We refer the reader to \cite{D} for an explicit formula
for the second eigenmatrix $Q$ appearing in the lemma.

\begin{lem}[{\cite[Proposition~2.5.2]{bcn}}]\label{lem:D}
Let $Y$ be a subset of vertices of $J(v,d)$, and set
\[a_i=\frac{1}{|Y|}|(Y\times Y)\cap R_i|\quad(0\leq i\leq d).\]
If we denote by $Q=(q^{(v)}_j(i))$ the second eigenmatrix
of $J(v,d)$, then every entry of the vector 
$(a_0,\dots,a_d)Q$ is nonnegative.
\end{lem}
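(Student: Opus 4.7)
The plan is to deduce the inequalities from the standard Bose--Mesner machinery for the Johnson association scheme, since the relations $R_0,R_1,\dots,R_d$ are precisely the classes of that scheme. Fix the adjacency matrices $A_i\in\RR^{X\times X}$ with $(A_i)_{x,y}=1$ if $(x,y)\in R_i$ and $0$ otherwise, so that $A_0=I$ and $\sum_i A_i=J$. The Bose--Mesner algebra they span has a second basis of primitive idempotents $E_0,\dots,E_d$, and by definition of the second eigenmatrix $Q=(q_j(i))$ one has
\[
E_j = \frac{1}{|X|}\sum_{i=0}^d q_j(i)\,A_i \qquad (0\le j\le d).
\]

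Next I would translate the quantities $a_i$ into a quadratic form. Let $\chi_Y\in\RR^X$ be the characteristic vector of $Y$. A direct count gives
\[
\chi_Y^{\,T} A_i\, \chi_Y = |(Y\times Y)\cap R_i| = |Y|\, a_i,
\]
so that
\[
\chi_Y^{\,T} E_j\, \chi_Y
 = \frac{1}{|X|}\sum_{i=0}^d q_j(i)\, \chi_Y^{\,T} A_i\, \chi_Y
 = \frac{|Y|}{|X|}\,\bigl((a_0,\dots,a_d)\,Q\bigr)_j.
\]

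The key ingredient is positivity of the left-hand side. Since the Johnson scheme is symmetric, each $A_i$, and hence each $E_j$, is a real symmetric matrix; being an idempotent it is an orthogonal projection, so it is positive semidefinite. Therefore $\chi_Y^{\,T}E_j\chi_Y\ge 0$, and dividing by the positive factor $|Y|/|X|$ yields $((a_0,\dots,a_d)Q)_j\ge 0$ for every $j$, which is the claim.

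The only potentially delicate point is a bookkeeping one: confirming that the second eigenmatrix $Q$ used in \cite{bcn} is the one satisfying $E_j=|X|^{-1}\sum_i q_j(i)A_i$ with the row/column indexing compatible with the statement (so that multiplying the row vector $(a_0,\dots,a_d)$ by $Q$ on the right produces the correct inner products). Once that convention is matched, the argument above is essentially a one-line consequence of $E_j\succeq 0$; no further combinatorial input about Johnson graphs is needed, which is exactly why the inequalities hold in any symmetric association scheme.
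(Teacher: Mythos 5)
Your proof is correct. The paper does not prove this lemma itself but cites \cite[Proposition~2.5.2]{bcn}, and your argument --- writing $\chi_Y^{\,T}E_j\chi_Y=\frac{|Y|}{|X|}\sum_i a_i q_j(i)$ and invoking positive semidefiniteness of the primitive idempotents $E_j$ --- is exactly the standard proof given there, valid in any symmetric association scheme. The one bookkeeping convention you flag ($E_j=|X|^{-1}\sum_i q_j(i)A_i$ with $Q$ having $(i,j)$ entry $q_j(i)$) is indeed the convention in force, so the right-multiplication of the row vector $(a_0,\dots,a_d)$ by $Q$ comes out as claimed.
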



Suppose that $Y$ is a subset of vertices of $J(v,d)$
such that two distinct members intersect at exactly
one element. Then by Lemma~\ref{lem:D}, every entry of the vector
\[
(1,0,\ldots,0,0,|Y|-1,0)Q
\]
is nonnegative, i.e.,
\[
q^{(v)}_j(0)+(|Y|-1)q_j^{(v)}(d-1)\geq0
\quad(1\leq j\leq d).
\]
Thus, we obtain 
\begin{equation}\label{eq:bound1}
|Y|\leq M_{v,d},
\end{equation}
where 
\[
M_{v,d}=\min\{ 1-\frac{q^{(v)}_j(0)}{q^{(v)}_j(d-1)}
\mid 1\leq j\leq d\text{ and }q_j^{(v)}(d-1)<0\}.
\]
If we define 
\[
M_{v,d}=\begin{cases}
2&\text{if $v=2d-1$,}\\
1&\text{if $d\leq v\leq 2d-2$,}\\
0&\text{if $0\leq v\leq d-1$,}
\end{cases}
\]
then~\eqref{eq:bound1} also holds for all $v,d$.

Now, let $C$ be a singly even self-dual code of length $n$
and let $S$ be the shadow of $C$.
For the remainder of this section, we assume that 
\begin{equation}\label{eq:2mod4}
n \equiv 2 \pmod 4 \text{ and } d(S)=\frac{d(C)}{2}+1.
\end{equation}
By Lemma~\ref{lem:C-S}, 
$d(C) \equiv n-2 \pmod 8$, and hence $d(S)$ is odd.

For each of $i=1,3$, let $Y_i$ be
the set of supports of vectors of weight $d(S)$ in 
$C_i$, and let $S_i$ be the union of the members 
of $Y_i$. 
From Lemma~\ref{lem:BP} and~\eqref{eq:2mod4}, we have the following:
\begin{equation}\label{eq:s}
| x \cap y| = 
\begin{cases}
1 & \text{ if } x,y \in Y_i,\; x\ne y, \\
0 & \text{ if } x \in Y_1,\; y \in Y_3.
\end{cases}
\end{equation}
Then by \eqref{eq:bound1}, we have
\[
|Y_i|\leq M_{|S_i|,d(S)}.
\]
It follows from~\eqref{eq:s} that $S_1\cap S_3=\emptyset$.
Thus, we have
\begin{equation}\label{bd}
B_{d(S)}=|Y_1|+|Y_3|\leq
\max\{ M_{v,d(S)}+M_{n-v,d(S)} \mid 0\leq v \leq n/2 \}.
\end{equation}

For $42 \le n \le 98$ and $d(C) =d(n)$, 
the parameters $(n,d(C),d(S))$ satisfying Condition~\eqref{eq:2mod4}
are listed in Table~\ref{Tab:2mod4},
where the values $d(n)$ are also listed in the table.
For some lengths $n$, the existence of a singly even self-dual
code of length $n$ and minimum weight $d(n)$
is currently not known.  In this case, we consider 
the case $d(C)=d(n)-2$.
We calculated the upper bound~\eqref{bd},
where the results are listed in Table~\ref{Tab:2mod4}.
This calculation was done by 
the program written in {\sc Magma}~\cite{Magma},
where the program is listed in~\ref{appendix}.

\begin{table}[thb]
\caption{Parameters satisfying~\eqref{eq:2mod4}}
\label{Tab:2mod4}
\begin{center}
{\small
\begin{tabular}{c|c|c|c|c}
\noalign{\hrule height0.8pt}
$n$ & $d(n)$ & $d(C)$&$d(S)$ & $B_{d(S)}$\\
\hline
42 &  8 & 8 & 5 &$\le 42$ \\
62 & 12 &12 & 7 &$\le 48$ \\
70 & 14 &12 & 7 &$\le 52$ \\
82 & 16 &16 & 9 &$\le 74$ \\
90 & 16 &16 & 9 &$\le 76$ \\
98 & 18 &16 & 9 &$\le 78$ \\
\noalign{\hrule height0.8pt}
\end{tabular}
}
\end{center}
\end{table}

We discuss the possible weight enumerators for the case
$d(n)=d(C)$ in Table~\ref{Tab:2mod4}.
The possible weight enumerators $W_{42}$ and $S_{42}$
of an extremal singly even self-dual $[42,21,8]$ code
with $d(S) \ge 5$
and its shadow are as follows~\cite{C-S}:
\[
\begin{array}{lll}
W_{42} &=& 1 + (84+8\beta)y^8 + (1449-24\beta)y^{10} + \cdots,\\
S_{42} &=& 
\beta y^5+ (896-8\beta)y^9 + (48384+28\beta)y^{13}+ \cdots, \\
\end{array}
\]
respectively,
where $\beta$ is an integer.
It was shown in~\cite{BHM42} that $0 \le \beta \le 42$.
Table~\ref{Tab:2mod4} gives an alternative proof.

The possible weight enumerators $W_{62}$ and $S_{62}$
of an extremal singly even self-dual $[62,31,12]$ code
with $d(S)\ge 7$
and its shadow are as follows~\cite{C-S} (see also~\cite{DH}):
\[
\begin{array}{lll}
W_{62} &=& 1 + (1860+32\beta)y^{12} + (28055-160\beta)y^{14}+ \cdots, \\
S_{62} &=& \beta y^7 + (1116-12\beta)y^{11}
+(171368     + 66 \beta) y^{15} + \cdots,
\end{array}
\]
respectively,
where $\beta$ is an integer with $0 \le \beta \le 93$.
Table~\ref{Tab:2mod4} gives the following:

\begin{prop}
If there exists an extremal singly even self-dual $[62,31,12]$ code
with weight enumerator $W_{62}$,
then $0 \le \beta \le 48$. 
\end{prop}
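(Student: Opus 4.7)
The plan is to apply the bound \eqref{bd} from Section~\ref{sec:2mod4} directly to the parameters $(n,d(C),d(S))=(62,12,7)$. First I would verify that condition~\eqref{eq:2mod4} is satisfied: $62\equiv 2\pmod 4$ and $d(S)=7=12/2+1$, so the shadow of such a code fits into the framework developed in Section~\ref{sec:2mod4}, and in particular the inequality \eqref{bd} is applicable. Next, by inspection of the displayed formula for $S_{62}$ the coefficient of $y^{d(S)}=y^7$ is exactly $\beta$, so $\beta=B_{d(S)}$; the lower bound $\beta\ge 0$ is then immediate since $B_{d(S)}$ counts vectors.

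For the upper bound, \eqref{bd} yields
\[
\beta = B_{d(S)} \le \max\{M_{v,7}+M_{62-v,7}\mid 0\le v\le 31\}.
\]
Evaluating this maximum reduces to computing $M_{v,7}$ for $0\le v\le 62$ using the explicit formula for the second eigenmatrix of the Johnson graph $J(v,7)$ from \cite{D}, together with the piecewise definition of $M_{v,d}$ covering the range $0\le v\le 2d-1$. This is a finite numerical computation, carried out by the {\sc Magma} program listed in the appendix, and the resulting value $48$ is already recorded in Table~\ref{Tab:2mod4}. Thus $\beta\le 48$.

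There is no genuinely hard step once the machinery of Section~\ref{sec:2mod4} is in place: the proof is a direct table look-up combined with the identification $\beta=B_{d(S)}$. The only point worth emphasizing is that the resulting bound $48$ is strictly smaller than the previously known upper bound $93$, so the Delsarte-type inequality of Lemma~\ref{lem:D}, applied to the supports $Y_1\sqcup Y_3$ on disjoint coordinate sets $S_1$ and $S_3$, genuinely sharpens the admissible range of $\beta$.
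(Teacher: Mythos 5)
Your proposal is correct and follows exactly the paper's route: identify $\beta=B_{d(S)}$ for $(n,d(C),d(S))=(62,12,7)$ and invoke the bound~\eqref{bd}, whose value $48$ is computed by the {\sc Magma} program and recorded in Table~\ref{Tab:2mod4}. The only (harmless) point left implicit is that when $\beta=0$ the shadow minimum weight exceeds $7$ and~\eqref{eq:2mod4} fails, but then the claimed inequality holds trivially.
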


It is known that there exists an extremal singly even self-dual
$[62,31,12]$ code with weight enumerator $W_{62}$
for $\beta=0,2,9,10,15,16$ (see~\cite{Y14}).

The possible weight enumerators $W_{82}$ and $S_{82}$
of an extremal singly even self-dual $[82,41,16]$ code
with $d(S)\ge 9$
and its shadow are as follows~\cite{DGH}:
\[
\begin{array}{lll}
W_{82} &=& 1 + (39524 + 128 \alpha) y^{16} + (556985 - 896 \alpha) y^{18} 
+ \cdots, \\
S_{82} &=& \alpha y^9 + (1640 - \alpha) y^{13} +
(281424+ 120 \alpha) y^{17} + \cdots, \\
\end{array}
\]
respectively,
where $\alpha$ is an integer with
$0 \leq \alpha \leq \lfloor \frac{556985}{896} \rfloor=621$.
Table~\ref{Tab:2mod4} gives the following:

\begin{prop}
If there exists an extremal singly even self-dual $[82,41,16]$ code
with weight enumerator $W_{82}$,
then $0 \le \alpha \le 74$. 
\end{prop}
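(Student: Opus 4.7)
The plan is to read off the proposition directly from the machinery already set up in this section. First I would identify the parameter $\alpha$ with the shadow coefficient $B_{d(S)}$: from the displayed expansion of $S_{82}$, the coefficient of $y^9$ is exactly $\alpha$, so $\alpha=B_9=B_{d(S)}$, and in particular $\alpha\ge 0$ since it counts vectors of weight $9$ in the shadow.

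Next I would verify that the parameters $(n,d(C),d(S))=(82,16,9)$ satisfy Condition~\eqref{eq:2mod4}: indeed $82\equiv 2\pmod 4$ and $d(S)=9=16/2+1=d(C)/2+1$. This row of Table~\ref{Tab:2mod4} is therefore within the scope of the analysis carried out earlier: we have the splitting $S=C_1\cup C_3$ with the intersection pattern~\eqref{eq:s}, and the Johnson graph argument using Lemma~\ref{lem:D} yields the bound~\eqref{bd} on $B_{d(S)}=|Y_1|+|Y_3|$.

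Finally I would invoke the entry in Table~\ref{Tab:2mod4} corresponding to $n=82$, which records that the maximum of $M_{v,9}+M_{82-v,9}$ over $0\le v\le 41$ equals $74$. (This is the sole computational step, and it is exactly what the \textsc{Magma} program in the appendix carries out using the explicit formula for the second eigenmatrix $Q$ of $J(v,9)$.) Combining with~\eqref{bd} gives $\alpha=B_9\le 74$, completing the proof.

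The only real obstacle is the numerical computation of $M_{v,9}$ across the relevant range of $v$, which relies on the explicit eigenmatrix of the Johnson scheme; but this has already been tabulated, so in the context of the paper the proposition is immediate.
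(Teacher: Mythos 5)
Your proof is correct and is essentially the paper's own argument: the proposition is stated as an immediate consequence of the $n=82$ row of Table~\ref{Tab:2mod4}, i.e.\ of the bound~\eqref{bd} applied with $(n,d(C),d(S))=(82,16,9)$ together with the identification $\alpha=B_9$. (The only cosmetic point is that $\alpha=B_{d(S)}$ presupposes $\alpha>0$; when $\alpha=0$ the conclusion is trivial anyway.)
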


It is unknown whether there exists an extremal singly even self-dual 
code for any of these cases.

The possible weight enumerators $W_{90}$ and $S_{90}$
of an extremal singly even self-dual $[90,45,16]$ code
with $d(S)\ge 9$
and its shadow are as follows~\cite{DGH}:
\[
\begin{array}{lll}
W_{90}  &=&  1 + (9180 + 8 \beta)y^{16} + (-512 \alpha - 24 \beta +
224360)y^{18} + \cdots,\\
S_{90} &=&  \alpha y^9 + (\beta -18 \alpha) y^{13} +(112320 + 153 \alpha -16
\beta)y^{17} + \cdots,
\end{array}
\]
respectively,
where $\alpha$ and $\beta$ are integers with
$0 \le \alpha \le \frac{1}{18} \beta \le 
\lfloor \frac{224360}{24} \rfloor =9348$.
Table~\ref{Tab:2mod4} gives the following:

\begin{prop}
If there exists an extremal singly even self-dual $[90,45,16]$ code
with weight enumerator $W_{90}$,
then $0 \le \alpha \le 76$. 
\end{prop}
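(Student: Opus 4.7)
The plan is to read off $\alpha$ as a weight count in the shadow and then invoke the bound already compiled in Table~\ref{Tab:2mod4}. From the displayed formula for $S_{90}$, the coefficient of $y^9$ is $\alpha$, so $\alpha = B_{d(S)} = B_9$. Since $B_9$ counts vectors, the lower bound $\alpha \ge 0$ is immediate.

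For the upper bound, I would note that the parameters $(n,d(C),d(S)) = (90,16,9)$ satisfy condition~\eqref{eq:2mod4}, so the general inequality~\eqref{bd} applies. To make the step to the value $76$ transparent, I would briefly reproduce the derivation in this special case: partition the set of supports of weight-$9$ shadow vectors as $Y = Y_1 \sqcup Y_3$ according to whether the corresponding vector lies in $C_1$ or in $C_3$. By~\eqref{eq:s}, distinct members of $Y_i$ meet in exactly one coordinate, and members of $Y_1$ and $Y_3$ are disjoint, so in particular $S_1 \cap S_3 = \emptyset$ and hence $|S_1| + |S_3| \le 90$. Applying~\eqref{eq:bound1} to each piece yields
\[
\alpha = |Y_1| + |Y_3| \le \max\{M_{v,9} + M_{90-v,9} \mid 0 \le v \le 45\}.
\]

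The only remaining task is evaluating this maximum. That reduces to computing the second eigenmatrix entries $q^{(v)}_j(0)$ and $q^{(v)}_j(8)$ of $J(v,9)$ for each relevant $v$, forming $1 - q^{(v)}_j(0)/q^{(v)}_j(8)$ for those $j$ with $q^{(v)}_j(8) < 0$, and taking the minimum, with the boundary cases $v \le 2d-2$ and $v = 2d-1$ handled by the piecewise definition of $M_{v,d}$. This is the main computational step, but it is routine given Delsarte's explicit formula from~\cite{D} and is exactly what the \textsc{Magma} program in~\ref{appendix} carries out; I would simply run it and record the output $76$ as the authors do. No further weight-enumerator identity beyond the shape of $S_{90}$ is needed.
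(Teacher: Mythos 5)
Your proposal is correct and follows exactly the paper's route: identify $\alpha$ with $B_{d(S)}$ for the parameters $(n,d(C),d(S))=(90,16,9)$ satisfying~\eqref{eq:2mod4}, apply the bound~\eqref{bd} via the decomposition into $Y_1$ and $Y_3$ and Delsarte's inequalities, and evaluate the maximum by the \textsc{Magma} computation recorded in Table~\ref{Tab:2mod4}. Nothing is missing.
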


It is unknown whether there exists an extremal singly even self-dual 
code for any of these cases.

\section{$n \equiv 0 \pmod 4$ and $d(S)=\frac{d(C)}{2}+1$}
\label{sec:0mod4}

Let $C$ be a singly even self-dual code of length $n$
and let $S$ be the shadow of $C$.
In this section, we write $d(C)=d$ and $d(S)=s$ for short,
and assume that 
\begin{equation}\label{eq:0mod4}
n \equiv 0 \pmod 4 \text{ and } s=\frac{d}{2}+1.
\end{equation}
By Lemma~\ref{lem:C-S}, 
$d \equiv n-2 \pmod 8$, and hence $s$ is even.

\begin{prop}[\cite{BHM}]\label{prop:BHM}
Suppose that $n \equiv 0 \pmod 4$ and $s=\frac{d}{2}+1$.
Let $B_{s}$ denote the number of vectors of weight $s$ in $S$.
\begin{enumerate}
\renewcommand{\labelenumi}{\rm (\roman{enumi})}
\item If $2n>(d+2)^2$, then
\[B_{s}\leq\frac{2n}{d+2}.\]
\item If $(d+2)^2\leq 4n\leq2(d+2)^2$, then
\[B_{s}\leq d+2,\quad B_{s}\neq d+1.\]
\item If $4n<(d+2)^2$, then
\[B_{s}\leq 2\frac{2n-d-2}{d}.\]
\end{enumerate}
\end{prop}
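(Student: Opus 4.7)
The approach parallels Section~\ref{sec:2mod4}, but with the intersection pattern that Lemma~\ref{lem:BP}(1) forces on minimum-weight shadow supports when $n\equiv0\pmod4$. Let $Y_i$ ($i\in\{1,3\}$) denote the set of supports of weight-$s$ vectors in $C_i$. For distinct $x,y\in Y_i$, one obtains $x\cdot y\equiv0\pmod2$ from Lemma~\ref{lem:BP}(1) (the case $Y_3$ reduces to $Y_1$ by writing $x=x_1+c$, $y=x_1+c'$ with $c,c'\in C_2$ and using $\wt(x_1)\equiv n/2\equiv0\pmod2$), while $x+y\in C_0\subseteq C$ has $\wt(x+y)=2s-2|x\cap y|\ge d=2s-2$; hence $|x\cap y|=0$. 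For $x\in Y_1$, $y\in Y_3$, the analogous argument with $x\cdot y=1$ and $x+y\in C_2\subseteq C$ gives $|x\cap y|=1$.

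Set $a=|Y_1|$, $b=|Y_3|$, and $S_i=\bigcup Y_i$. Disjointness within each $Y_i$ together with the single-point cross intersections gives $|S_1\cup S_3|=s(a+b)-ab\le n$, equivalently $(s-a)(s-b)\ge s^2-n$. Moreover, whenever both $a,b\ge1$, each $x\in Y_1$ contains $b$ distinct intersection points (one per member of $Y_3$), so $b\le s$, and symmetrically $a\le s$. For case (i) ($n>2s^2$), $n/s>2s$ gives $B_s=a+b\le\max(n/s,2s)=n/s=2n/(d+2)$. For case (iii) ($n<s^2$), the strict inequality $s^2-n>0$ forces $a,b\le s-1$ when both are positive; with $u=s-a$, $v=s-b\in[1,s-1]$ and $uv\ge s^2-n$, we deduce $u,v\ge(s^2-n)/(s-1)$, hence $B_s=2s-(u+v)\le2(n-s)/(s-1)=2(2n-d-2)/d$. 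The boundary sub-case $ab=0$ in each of (i) and (iii) is handled directly by $B_s\le n/s$.

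The hard part is case (ii): the bound $B_s\le d+2$ is immediate from $a,b\le s$, but excluding $B_s=d+1=2s-1$ is subtler. If both $a,b\ge1$, then $a+b=2s-1$ together with $a,b\le s$ forces $\{a,b\}=\{s,s-1\}$; WLOG $a=s$, $b=s-1$. Then $Y_1$ partitions $S_1$ (of size $s^2$) into $s$ rows of size $s$, and $Y_3$ consists of $s-1$ pairwise disjoint transversals, so $S_3\subseteq S_1$ and $T:=S_1\setminus S_3$ has $|T|=s$. The identity $T=\sum_{x\in Y_1}x+\sum_{y\in Y_3}y$ in $\FF_2^n$, combined with the fact that exactly one of $a=s$ and $b=s-1$ is odd, places $T\in C_1\cup C_3$, so $T$ must belong to $Y_1$ or $Y_3$; but $T$ meets each row in exactly one element, so $T\notin Y_1$, and $T\cap S_3=\emptyset$, so $T\notin Y_3$---contradiction. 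If instead $a=0$, $b=2s-1$ (the case $b=0$ is symmetric), then $\sum_{y\in Y_3}y\in C_3$ and $T':=[n]\setminus S_3$ has indicator $\sum_{y\in Y_3}y+\allone\in C_3$ (using $\allone\in C_0$, since $n\equiv0\pmod4$), with $|T'|=n-(2s-1)s\in[0,s]$; the three sub-cases $|T'|=0$, $0<|T'|<s$, and $|T'|=s$ give contradictions with the coset structure ($\allone\in C_0\neq C_3$), with $d(S)=s$, and with the assumption $|Y_3|=2s-1$ (since $T'$ would be a further member of $Y_3$), respectively.

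The delicate step is the parity tracking in case (ii), which places the indicator of the missing transversal (or of $[n]\setminus S_3$) in the correct coset of $C_0$; this exploits $\allone\in C_0$, a consequence of $n\equiv0\pmod4$.
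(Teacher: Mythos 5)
Your proof is correct, and for the parts the paper actually writes out it follows the same route: the pairwise disjointness of supports within each $\cB_i$ (giving $|\cB_i|\leq 2n/(d+2)$), the single-point intersections across $\cB_1$ and $\cB_3$ (giving $|\cB_i|\leq s$ when both are nonempty), and the covering count $s(a+b)-ab\leq n$ that also underlies Lemma~\ref{lem:maxab}. The paper cites this proposition from \cite{BHM} and never proves the exclusion $B_s\neq d+1$; your ``missing transversal'' argument for it --- forcing $\{a,b\}=\{s,s-1\}$ or $\{0,2s-1\}$, and then using the parity of $s$ (which is even here, by Lemma~\ref{lem:C-S}) together with $\allone\in C_0$ to place the complementary set in $C_1\cup C_3$ and contradict either $d(S)=s$ or the disjointness of that set from $S_3$ --- is a genuine addition and is sound. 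Two small points you should tidy: the orthogonality $x\cdot y=0$ for $x,y\in C_3$ is most cleanly obtained by noting the labeling of $C_1$ and $C_3$ is arbitrary (or by observing $x_1\cdot(c+c')=0$ since $c+c'\in C_0$), rather than only invoking $\wt(x_1)\equiv 0\pmod 2$; and in case (iii) the sub-case $ab=0$ needs $n/s\leq 2(2n-d-2)/d$, which amounts to $n(s+1)\geq 2s^2$, i.e.\ $n>d$ --- true for any nontrivial code but worth a word.
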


The above proposition was essentially established by 
showing $B_{s}\leq\max\{l_1,l_2\}$, where
\begin{align*}
l_1&=\frac{2n}{d+2},\\
l_2&=\min\left\{d+2,2\frac{2n-d-2}{d}\right\}.
\end{align*}
We recall part of the proof of Proposition~\ref{prop:BHM}
for later use.
Denote the set of all vectors in $C_i$ of weight $s$ by 
$\cB_i$ ($i = 1,3$). Denote by $v * w$ the entrywise product
of two vectors $v,w$.
If $v, w \in \cB_i$, then $\wt(v * w) = 0$ and 
hence these vectors have disjoint supports. This implies
\begin{equation}\label{l1}
|\cB_i|\leq l_1\quad(i=1,3).
\end{equation}
If $v\in \cB_1$ and $w \in \cB_3$, then $\wt(v * w) = 1$. 
Thus, if 
$\cB_1$ and $\cB_3$ are both nonempty, then 
\begin{equation}\label{l2}
|\cB_i|\leq s.
\end{equation}

Using the following lemmas,
we give an improvement of the upper bound by showing
$B_{s}\leq\max\{l'_1,l'_2\}$, where
\begin{align*}
l'_1&=
\begin{cases}
l_1&\text{if $n$ is divisible by $2s$,}\\
2\left\lceil\frac{n-d+2}{d+2}\right\rceil-1
&\text{otherwise,}
\end{cases}\\
l'_2&=\begin{cases}
d+2-\left\lceil\sqrt{(d+2)^2-4n}\,\right\rceil
&\text{if $4n<(d+2)^2$,}\\
\min\left\{d+2,
4\left\lceil\frac{n-d+2}{d+2}\right\rceil-2\right\}
&\text{otherwise.}
\end{cases}
\end{align*}
Since
\begin{equation}\label{n2s}
\left\lceil\frac{n-d+2}{d+2}\right\rceil
=\left\lceil\frac{n/4-(s/2-1)}{s/2}\right\rceil
\leq
\frac{n}{2s},
\end{equation}
we have
\begin{equation}\label{l1'}
l'_1\leq l_1,
\end{equation}
and
\[4\left\lceil\frac{n-d+2}{d+2}\right\rceil-2
\leq
\frac{2n}{s}-2
<2\frac{2n-d-2}{d}.\]
The latter implies $l'_2\leq l_2$ provided $4n\geq(d+2)^2$.
If $4n<(d+2)^2$, then
\begin{align*}
&2\frac{2n-d-2}{d}-\left(d+2-\sqrt{(d+2)^2-4n}\right)
\\&=
\frac{\sqrt{(d+2)^2-4n}}{d}(d-\sqrt{(d+2)^2-4n})
\\&\geq0.
\end{align*}
Thus $l'_2\leq l_2$ holds in this case as well.
Therefore, the bound $B_s\leq\max\{l'_1,l'_2\}$ which will be
shown in Proposition~\ref{prop:imp} below is an improvement
of the bound given in Proposition~\ref{prop:BHM}.

\begin{lem}\label{lem:1}
Let
\[k=\left\lceil\frac{n-d+2}{2s}\right\rceil.\]
If $n$ is not divisible by $2s$, then
$|\cB_i|\leq2k-1$ for $i=1,3$.
\end{lem}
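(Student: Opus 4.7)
The plan is to exploit the minimum-weight constraints on $C$ and on the shadow $S$ by summing up \emph{all} vectors of $\cB_i$ together with the all-ones vector $\mathbf{1}$, and then playing off the parity of $t=|\cB_i|$.

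First I would observe that $\mathbf{1}\in C$ (every self-dual code contains the all-ones vector), and since $\wt(\mathbf{1})=n\equiv 0\pmod 4$, in fact $\mathbf{1}\in C_0$. Enumerate $\cB_i=\{v_1,\dots,v_t\}$; by the disjoint-supports observation already used in deriving \eqref{l1}, the vector $u:=v_1+\cdots+v_t$ has $\wt(u)=st$, so $u':=\mathbf{1}+u$ has $\wt(u')=n-st$. Using $C_i+C_i\subseteq C_0$ together with $\mathbf{1}\in C_0$, one sees that $u$ and $u'$ lie in the same coset of $C_0$: in $C_0$ if $t$ is even, and in $C_i$ if $t$ is odd.

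The next step is a parity split. When $t$ is even, both $u$ and $u'$ lie in $C_0\subseteq C$. Here the hypothesis of the lemma is used crucially: $u'=0$ would give $u=\mathbf{1}$, hence $st=n$ with $t$ even, forcing $2s\mid n$; excluding this yields $u'\neq0$ and therefore $\wt(u')\geq d$, i.e.\ $st\leq n-d$. When $t$ is odd, both $u$ and $u'$ lie in $C_i\subseteq S$, and $u'=0$ would put $\mathbf{1}$ into $C_i$, contradicting $\mathbf{1}\in C_0$. Hence $\wt(u')\geq s$, i.e.\ $st\leq n-s$.

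To finish, I would assume $t\geq 2k$ and derive a contradiction in each parity. The definition of $k$ gives $2ks\geq n-d+2$, so for even $t\geq 2k$ one has
\[st\geq 2ks\geq n-d+2>n-d,\]
contradicting the even case. For odd $t\geq 2k+1$, using $d=2s-2$,
\[st\geq(2k+1)s=2ks+s\geq (n-d+2)+s=n-s+4>n-s,\]
contradicting the odd case. Hence $t\leq 2k-1$, as claimed.

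The only delicate point, and the one I would flag as the main obstacle, is the even-$t$ case: one must recognize that the hypothesis "$n$ not divisible by $2s$" is tailored precisely to rule out the sole degenerate possibility $u'=0$ (equivalently $u=\mathbf{1}$), which would otherwise wreck the argument. Everything else is essentially bookkeeping with cosets and minimum weights.
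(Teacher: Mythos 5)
Your argument is correct and rests on the same key device as the paper's proof: adding the all-ones vector (which lies in $C_0$ since $n\equiv 0\pmod 4$) to a sum of pairwise disjointly supported weight-$s$ vectors from $\cB_i$, invoking a minimum-weight bound, and using the hypothesis $2s\nmid n$ to rule out the degenerate case where the resulting vector vanishes. The paper's version is slightly leaner: instead of summing all $t=|\cB_i|$ vectors and splitting on the parity of $t$ (which forces you to also invoke $d(S)=s$ in the odd case), it sums exactly $2k$ of them, so the result automatically lands in $C_0$ with weight $n-2ks\le d-2$, and the conclusion follows at once.
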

\begin{proof}
Suppose, to the contrary, $|\cB_i| \ge 2k$.
Then the sum of the all-one vector and the
$2k$ vectors of weight $s$
belongs to $C_0$ and has weight 
$n-2ks\leq d-2$.
This forces $n-2ks=0$, contradicting the assumption.
\end{proof}

\begin{lem}\label{lem:maxab}
Let $n$ and $s$ be positive integers with $n<s^2$. Then
\[\max\{a+b\mid a,b\in\ZZ,\;0\leq a,b\leq s,\;s(a+b)-ab\leq n\}
=2s-\left\lceil2\sqrt{s^2-n}\,\right\rceil.\]
\end{lem}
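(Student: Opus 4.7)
The plan is to perform the substitution $a'=s-a$, $b'=s-b$, which uses the identity $(s-a)(s-b)=s^2-s(a+b)+ab$ to rewrite the constraint $s(a+b)-ab\leq n$ as $a'b'\geq s^2-n$. Since $a+b=2s-(a'+b')$ and the box constraint $0\leq a,b\leq s$ transforms to $0\leq a',b'\leq s$, maximizing $a+b$ is equivalent to minimizing $a'+b'$ over nonnegative integers $a',b'\leq s$ with $a'b'\geq s^2-n$. Note $s^2-n>0$ by hypothesis.

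For the lower bound, I would invoke AM--GM: every admissible pair satisfies $a'+b'\geq 2\sqrt{a'b'}\geq 2\sqrt{s^2-n}$, so by integrality $a'+b'\geq m:=\lceil2\sqrt{s^2-n}\,\rceil$, giving $a+b\leq 2s-m$.

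For achievability, I would take $a'=\lfloor m/2\rfloor$ and $b'=\lceil m/2\rceil$, so $a'+b'=m$ and $a'b'=\lfloor m^2/4\rfloor$. Two things need to be checked: (i) $a'b'\geq s^2-n$, and (ii) $a',b'\leq s$. For (i), the definition of $m$ gives $4(s^2-n)\leq m^2$. If $m$ is even, $m^2/4$ is already an integer and the inequality passes to the floor. If $m$ is odd, $m^2/4$ is not an integer, so in fact $4(s^2-n)<m^2$, hence $s^2-n\leq(m^2-1)/4=\lfloor m^2/4\rfloor$ by integrality of $s^2-n$. For (ii), since $n\geq 1$ we have $2\sqrt{s^2-n}\leq 2\sqrt{s^2-1}<2s$, so $m\leq 2s$ and thus $\lceil m/2\rceil\leq s$.

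The only subtle step is the odd-$m$ case in (i), where the gap between $m^2/4$ and $\lfloor m^2/4\rfloor$ must be absorbed by the strict inequality $4(s^2-n)<m^2$ coming from the non-integrality of $2\sqrt{s^2-n}$; everything else is a routine bookkeeping exercise using AM--GM and the change of variables.
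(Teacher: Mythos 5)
Your proof is correct, and it takes a genuinely different route from the paper's. The paper first solves the real relaxation: it rewrites the maximum as $\max\{(n-a^2)/(s-a)\mid 0\le a<s\}$, finds by calculus that the function $f(x)=(n-x^2)/(s-x)$ peaks at $\alpha=s-\sqrt{s^2-n}$ with value $2\alpha$, deduces the upper bound $\lfloor2\alpha\rfloor$, and then exhibits a feasible integer pair $a=\lfloor\alpha\rfloor$, $b\in\{\lfloor\alpha\rfloor,\lfloor\alpha\rfloor+1\}$ chosen according to whether the fractional part of $\alpha$ is below $\tfrac12$, verifying feasibility by a two-case computation. You instead exploit the identity $s(a+b)-ab=s^2-(s-a)(s-b)$ to turn the problem into minimizing $a'+b'$ subject to $a'b'\ge s^2-n$; AM--GM immediately gives the lower bound $\lceil2\sqrt{s^2-n}\,\rceil$, and the balanced split $a'=\lfloor m/2\rfloor$, $b'=\lceil m/2\rceil$ attains it, with the only delicate point (the odd-$m$ case, where $4(s^2-n)=m^2$ is ruled out because $m^2$ would then be divisible by $4$) handled correctly; the endpoint check $\lceil m/2\rceil\le s$ from $n\ge1$ is also needed and you supply it. Your argument is more elementary --- no real optimization, no analysis of fractional parts --- and the substitution makes both the bound and the extremal configuration transparent; what the paper's route buys is the explicit continuous optimum $\alpha$, which it reuses in the subsequent Remark when discussing uniqueness of the maximizing pair $\{a,b\}$.
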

\begin{proof}
Since $n<s^2$, we have
\begin{align*}
&\max\{a+b\mid a,b\in\RR,\;0\leq a,b\leq s,\;s(a+b)-ab\leq n\}
\\&=\max\{a+b\mid 0\leq a,b\leq s,\;(s-a)b\leq n-sa\}
\\&=\max\{a+\min\{(n-sa)/(s-a),s\}\mid 0\leq a< s\}
\\&=\max\{(n-a^2)/(s-a)\mid 0\leq a< s\}.
\end{align*}
The function $f(x)=(n-x^2)/(s-x)$ defined on the interval $[0,s)$
has maximum $f(\alpha)=2\alpha$, where $\alpha=s-\sqrt{s^2-n}$.
Thus, we have
\begin{align*}
&\max\{a+b\mid a,b\in\ZZ,\;0\leq a,b\leq s,\;s(a+b)-ab\leq n\}
\\&\leq
\left\lfloor
\max\{a+b\mid a,b\in\RR,\;0\leq a,b\leq s,\;s(a+b)-ab\leq n\}
\right\rfloor
\\&=
\left\lfloor2\alpha\right\rfloor.
\end{align*}
Define $a,b\in\ZZ$ by
$a=\left\lfloor\alpha\right\rfloor$ and
\[b=\begin{cases}
\left\lfloor\alpha\right\rfloor&\text{if $\alpha-
\left\lfloor\alpha\right\rfloor<\frac12$,}\\
\left\lfloor\alpha\right\rfloor+1&\text{otherwise.}
\end{cases}\]
Then $a+b=\left\lfloor2\alpha\right\rfloor=
2s-\left\lceil2\sqrt{s^2-n}\,\right\rceil$. Since
$\alpha<s$, we have $b\leq s$. It remains to show
$s(a+b)-ab\leq n$, or equivalently,
\begin{equation}\label{eq:sab}
ab-s(a+b)+n\geq0.
\end{equation}
Observe
\[s-\left\lfloor\alpha\right\rfloor=\left\lceil\sqrt{s^2-n}\right\rceil.\]

If $\alpha-\left\lfloor\alpha\right\rfloor<\frac12$, then
\begin{align*}
ab-s(a+b)+n&=
\left\lfloor\alpha\right\rfloor^2-2s\left\lfloor\alpha\right\rfloor+n
\\&=
(s-\left\lfloor\alpha\right\rfloor)^2-(s^2-n)
\\&=
\left\lceil\sqrt{s^2-n}\right\rceil^2-(s^2-n)
\\&\geq0.
\end{align*}
Thus, \eqref{eq:sab} holds.

If $\alpha-\left\lfloor\alpha\right\rfloor\geq\frac12$, then
\[s-\left\lfloor\alpha\right\rfloor\geq\sqrt{s^2-n}+\frac12.\]
Thus
\begin{align*}
ab-s(a+b)+n&=
\left\lfloor\alpha\right\rfloor
(\left\lfloor\alpha\right\rfloor+1)
-s(2\left\lfloor\alpha\right\rfloor+1)+n
\\&=
(\left\lfloor\alpha\right\rfloor-s)
(\left\lfloor\alpha\right\rfloor+1-s)-(s^2-n)
\\&\geq
\left(\sqrt{s^2-n}+\frac12\right)
\left(\sqrt{s^2-n}-\frac12\right)-(s^2-n)
\\&=
-\frac14.
\end{align*}
Since $ab-s(a+b)+n$ is an integer, \eqref{eq:sab} holds.
\end{proof}

\begin{prop}\label{prop:imp}
Suppose that $n \equiv 0 \pmod 4$ and $s=\frac{d}{2}+1$.
Let $B_{s}$ denote the number of vectors of weight $s$
in $S$. Then
\begin{equation}\label{bd12}
B_{s}\leq\max\{l'_1,l'_2\}.
\end{equation}
More precisely,
\begin{enumerate}
\renewcommand{\labelenumi}{\rm (\roman{enumi})}
\item If $2n>d^2+6d$, then
\[B_{s}\leq
\begin{cases}
\frac{2n}{d+2}&\text{if $n$ is divisible by $2s$,}\\
2\left\lceil\frac{n-d+2}{d+2}\right\rceil-1
&\text{otherwise.}
\end{cases}
\]
\item If $(d+2)^2<2n\leq d^2+6d$, then
\[B_{s}\leq 
\begin{cases}
\frac{2n}{d+2}&\text{if $n$ is divisible by $2s$,}\\
d+2&\text{otherwise.}
\end{cases}\]
\item If $d^2+8d-4<4n\leq2(d+2)^2$, then
\[B_{s}\leq d+2,\quad B_{s}\neq d+1.\]
\item If $(d+2)^2\leq 4n\leq d^2+8d-4$, then
\[B_{s}\leq 4\left\lceil\frac{n-d+2}{d+2}\right\rceil-2.\]
\item If $4n<(d+2)^2$, then
\[B_{s}\leq d+2-\left\lceil\sqrt{(d+2)^2-4n}\,\right\rceil.\]
\end{enumerate}
\end{prop}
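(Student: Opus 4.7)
The plan is to bound $B_s=|\cB_1|+|\cB_3|$ by splitting according to whether one of $\cB_1,\cB_3$ is empty or both are nonempty, and then to combine within-side disjointness (already recorded in \eqref{l1} and sharpened by Lemma~\ref{lem:1}) with a new across-sides support-count inequality.

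If one of $\cB_1,\cB_3$, say $\cB_3$, is empty, then $B_s=|\cB_1|$, and \eqref{l1} together with Lemma~\ref{lem:1} (the latter when $2s\nmid n$) at once gives $B_s\le l'_1$.

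The substantive case has $a:=|\cB_1|\ge1$ and $b:=|\cB_3|\ge1$. From \eqref{l2} I have $a,b\le s$, sharpened by Lemma~\ref{lem:1} to $a,b\le2k-1$ (where $k=\lceil(n-d+2)/(2s)\rceil$) when $2s\nmid n$. For the new ingredient, I fix $w\in\cB_3$: by the single-intersection property each $v\in\cB_1$ meets $\supp(w)$ in exactly one point, and these points are distinct because members of $\cB_1$ have pairwise disjoint supports, so $|\supp(\cB_1)\cap\supp(w)|=a$; summing over $w\in\cB_3$ gives $|\supp(\cB_1)\cap\supp(\cB_3)|=ab$, whence
\[
s(a+b)-ab=|\supp(\cB_1)\cup\supp(\cB_3)|\le n.
\]
When $4n\ge(d+2)^2$ (equivalently $n\ge s^2$) this support-count inequality is implied by $a,b\le s$, so the active bound is $a+b\le 2s=d+2$, sharpened by applying Lemma~\ref{lem:1} to each of $a$ and $b$ to yield $a+b\le\min\{d+2,\,4\lceil(n-d+2)/(d+2)\rceil-2\}$, matching $l'_2$ in this regime. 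When $4n<(d+2)^2$, I feed the support-count inequality together with the range $0\le a,b\le s$ into Lemma~\ref{lem:maxab} to obtain $a+b\le 2s-\lceil2\sqrt{s^2-n}\,\rceil=d+2-\lceil\sqrt{(d+2)^2-4n}\,\rceil$, again matching $l'_2$. Either way, $B_s\le l'_2$ in this case.

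Combining the two cases proves \eqref{bd12}. The refinements (i)--(v) are then extracted by comparing $l'_1$ and $l'_2$ on the stated intervals for $n$, guided by the quantitative inequalities already displayed in the paper between $l'_1,l'_2$ and $l_1,l_2$. The extra exclusion $B_s\ne d+1$ in case (iii) is inherited from Proposition~\ref{prop:BHM}(ii), whose range contains the range of (iii) since $d^2+8d-4\ge(d+2)^2$ for $d\ge 2$. The main obstacle is the case bookkeeping: verifying that Lemma~\ref{lem:maxab} applied to the support-count inequality produces exactly the small-$n$ branch of $l'_2$, and then aligning the internal threshold $4n=(d+2)^2$ (where the form of $l'_2$ switches) with the boundaries defining (i)--(v).
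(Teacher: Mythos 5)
Your overall strategy is the same as the paper's: when one of $\cB_1,\cB_3$ is empty you invoke \eqref{l1} and Lemma~\ref{lem:1} to get $B_s\le l'_1$, and when both are nonempty you combine $|\cB_i|\le s$ with the support count $s(a+b)-ab=|\supp(\cB_1)\cup\supp(\cB_3)|\le n$ and Lemma~\ref{lem:maxab}; your derivation of $|\supp(\cB_1)\cap\supp(\cB_3)|=ab$ and your reduction of the exclusion $B_s\ne d+1$ in (iii) to Proposition~\ref{prop:BHM}(ii) are exactly the paper's steps.

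There is, however, one step that is not justified as written. In the regime $n\ge s^2$ with both $\cB_1,\cB_3$ nonempty you assert $a+b\le\min\{d+2,\,4\lceil(n-d+2)/(d+2)\rceil-2\}$ ``by applying Lemma~\ref{lem:1} to each of $a$ and $b$''; but Lemma~\ref{lem:1} carries the hypothesis $2s\nmid n$ (which you correctly record two sentences earlier and then drop), and when $2s\mid n$ it yields nothing. The subcase where this matters is precisely $4n=(d+2)^2$, i.e.\ $n=s^2$: there $2s\mid n$ holds automatically because $s$ is even, Lemma~\ref{lem:maxab} is also unavailable (it needs $n<s^2$), and the only remaining bounds are $a,b\le s=l_1$, giving $B_s\le 2s=d+2$ rather than the value $4\lceil(n-d+2)/(d+2)\rceil-2=2s-2=d$ required by $l'_2$ and by case~(iv). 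Nothing in the combinatorial setup forbids $a=b=s$ at $n=s^2$ (take the rows and the columns of an $s\times s$ grid as the two families of supports: within each family the supports are disjoint, and any row meets any column in one point), so closing this would need genuinely new input. This boundary is not vacuous: $(n,d,s)=(100,18,10)$ satisfies $4n=(d+2)^2$ and is one of the two parameter sets the paper features under case~(iv). To be fair, the paper's own proof is equally silent here --- for $n\ge s^2$ it derives only $B_s\le d+2$ when both $\cB_i$ are nonempty and then asserts $B_s\le l'_2$ --- so you have reproduced the published argument faithfully, gap included; but a complete proof must either treat $4n=(d+2)^2$ separately or weaken the conclusion there to $B_s\le d+2$, $B_s\ne d+1$.
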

\begin{proof}
If one of $\cB_1$ and $\cB_3$ is empty, then 
\eqref{l1} and Lemma~\ref{lem:1} imply $B_{s}\leq l'_1$.
If 
$\cB_1$ and $\cB_3$ are both nonempty, then by \eqref{l2},
we have $B_{s}\leq2s=d+2$.
Moreover, suppose $n<s^2$. 
Observe
\[\left|\bigcup_{x\in\cB_1\cup\cB_3}\supp(x)\right|=
s(|\cB_1|+|\cB_3|)-|\cB_1||\cB_3|,\]
and this is at most $n$. By \eqref{l2}, we can apply
Lemma~\ref{lem:maxab} to conclude
\[B_{s}\leq2s-\left\lceil2\sqrt{s^2-n}\,\right\rceil.\]
Thus $B_{s}\leq l'_2$.
Therefore, \eqref{bd12} holds.

Next, we determine $\max\{l'_1,l'_2\}$.
If $2n>d^2+6d$, then
\[\frac{n-d+2}{d+2}>\frac12(d+2)\in\ZZ,\]
so
\begin{align*}
l'_1&\geq
2\left\lceil\frac{n-d+2}{d+2}\right\rceil-1
&&\text{(by \eqref{n2s})}
\\&\geq
2\left(\frac12(d+2)+1\right)-1
\\&=d+3
\\&\geq l'_2.
\end{align*}
Thus $\max\{l'_1,l'_2\}=l'_1$, and (i) holds.

Next suppose $(d+2)^2<2n\leq d^2+6d$.
Since
\begin{align*}
4\left\lceil\frac{n-d+2}{d+2}\right\rceil-2-(d+2)
&\geq
4\frac{n-d+2}{d+2}-2-(d+2)
\\&>\frac{d^2-2d+8}{d+2}
\\&>0,
\end{align*}
we have $l'_2=d+2$.
Since
\[\frac{n-d+2}{d+2}\leq \frac12(d+2)\in\ZZ,\]
we have
\[2\left\lceil\frac{n-d+2}{d+2}\right\rceil-1<d+2<l_1.\]
These imply
\[\max\{l'_1,l'_2\}=\begin{cases}
l_1&\text{if $n$ is divisible by $2s$,}\\
l'_2&\text{otherwise,}
\end{cases}\]
and (ii) holds.

Next suppose $(d+2)^2\leq 4n\leq2(d+2)^2$. 
We claim
\[l'_2=\begin{cases}
d+2&\text{if $4n\leq d^2+8d-4$,}\\
4\left\lceil\frac{n-d+2}{d+2}\right\rceil-2
&\text{otherwise.}
\end{cases}\]
Indeed, since $(d+4)/4=(s+1)/2\notin\ZZ$, we have
\begin{align*}
d+2>4\left\lceil\frac{n-d+2}{d+2}\right\rceil-2
&\iff
\frac{s}{2}\geq\left\lceil\frac{n-d+2}{d+2}\right\rceil
\\&\iff
\frac{s}{2}\geq\frac{n-d+2}{d+2}
\\&\iff
4n\leq d^2+8d-4.
\end{align*}
Since $4n\geq(d+2)^2$ and $d\neq4$, we have $n\geq 3d-2$. Thus
\[4\left\lceil\frac{n-d+2}{d+2}\right\rceil-2\geq\frac{2n}{d+2}.\]
This, together with $2n\leq(d+2)^2$ implies $l_1\leq l'_2$. Therefore,
$\max\{l'_1,l'_2\}=l'_2$. Now (iii) and (iv) hold
by Proposition~\ref{prop:BHM}~(ii).

Finally, suppose $4n<(d+2)^2$. Then it is easy to verify
\[\frac{2n}{d+2}\leq d+2-\sqrt{(d+2)^2-4n},\]
hence $\max\{l'_1,l'_2\}=l'_2$ by \eqref{l1'}. Thus (v) holds.
\end{proof}

\begin{rem}
In Proposition~\ref{prop:imp}~(v), it is sometimes possible to
draw a stronger conclusion
\[|\cB_i|=\frac12
\left(d+2-\left\lceil\sqrt{(d+2)^2-4n}\,\right\rceil\right)
\quad(i=1,3).\]
This is when a pair $\{a,b\}$ achieving the maximum in 
Lemma~\ref{lem:maxab} is unique. For the parameters
$(n,d,s)=(128,22,12)$, 
we necessarily have $|\cB_i|=8$ for $i=1,3$.
In general, a pair $\{a,b\}$ achieving the maximum in 
Lemma~\ref{lem:maxab} is not unique. For example, 
when $(n,d,s)=(120,22,12)$, both
$\{6,8\}$ and $\{7,7\}$ achieve the maximum.
\end{rem}

%

For only the parameters
$(n,d,s)=
(72, 14, 8)$ and 
$(100,18,10)$,
Proposition~\ref{prop:imp} gives an improvement over
Proposition~\ref{prop:BHM},
for $44 \le n \le 100$ and $d=d(n)$.
The bounds on $B_{s}$ obtained by Proposition~\ref{prop:imp}
are listed in Table~\ref{Tab:0mod4} for these parameters,
together with the part of Proposition~\ref{prop:imp} used,
where the bounds by Proposition~\ref{prop:BHM} are listed
in the last column.
The values $d(n)$ are also listed in the table.

\begin{table}[thb]
\caption{Parameters satisfying~\eqref{eq:0mod4}}
\label{Tab:0mod4}
\begin{center}
{\small
\begin{tabular}{c|c|c|c|c|c}
\noalign{\hrule height0.8pt}
$n$ & $d(n)$ & $d$&$s$ & Proposition~\ref{prop:imp} & Proposition~\ref{prop:BHM}\\
\hline
 72 & 14 & 14 & 8 & $B_{s}\le 14$ (iv)& $B_{s}\le 16$, $\ne 15$ \\
100 & 18 & 18 & 10 & $B_{s}\le 18$ (iv)& $B_{s}\le 20$, $\ne 19$ \\
\hline
108 & - & 18 &10 &$B_{s}\le 18$  (iv)& $B_{s}\le 20$, $\ne 19$ \\
116 & - & 18 &10 &$B_{s}\le 18$  (iv)& $B_{s}\le 20$, $\ne 19$ \\
128 & - & 22 &12 &$B_{s}\le 16$  (v)& $B_{s}\le 21$ \\
\noalign{\hrule height0.8pt}
\end{tabular}
}
\end{center}
\end{table}


We discuss the possible weight enumerators for the case
$d(n)=d$ in Table~\ref{Tab:0mod4}.
The possible weight enumerators of an extremal singly even
self-dual $[72,36,14]$ code with $s\ge 8$ and the shadow are as follows:
\[
\begin{array}{lll}
W_{72} &=& 1 + (8640 - 64 \alpha)y^{14} + (124281 + 384 \alpha) y^{16} 
+ \cdots, \\
S_{72} &=& 
\alpha y^8 + (546 - 14 \alpha) y^{12}  + (244584+91 \alpha)y^{16}
+ \cdots,
\end{array}
\]
respectively, 
where $\alpha$ is an integer with 
$0\leq \alpha\leq \lfloor \frac{546}{14} \rfloor=39$~\cite{DGH}.
We remark that Conway and Sloane~\cite{C-S} give only
two weight enumerators as the possible weight enumerators
of an extremal singly even self-dual $[72,36,14]$ code with 
$s\ge 8$ without reason, namely $\alpha=0,1$ in 
$W_{72}$.
Table~\ref{Tab:0mod4} shows the following:

\begin{prop}
If there exists an extremal singly even
self-dual $[72,36,14]$ code with weight enumerator $W_{72}$,
then $0 \le \alpha \le 14$.
\end{prop}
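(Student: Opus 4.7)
The plan is to read off $B_s$ directly from the shadow weight enumerator $S_{72}$ and then apply Proposition~\ref{prop:imp} with $(n,d,s) = (72,14,8)$. From the expression for $S_{72}$, the coefficient of $y^8 = y^{d/2+1}$ is exactly $\alpha$, so we have $B_s = \alpha$. The nonnegativity $\alpha \geq 0$ is automatic from the fact that $B_s$ counts shadow vectors, so the only real content is the upper bound $\alpha \leq 14$.

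For the upper bound, I would verify that $(n,d,s) = (72,14,8)$ falls into case (iv) of Proposition~\ref{prop:imp}, which requires $(d+2)^2 \leq 4n \leq d^2 + 8d - 4$. Plugging in: $(d+2)^2 = 256$, $4n = 288$, and $d^2 + 8d - 4 = 196 + 112 - 4 = 304$, so indeed $256 \leq 288 \leq 304$. We should also confirm that $s = d/2+1 = 8$ is the shadow minimum weight assumption underlying the setup (this is precisely the hypothesis $d(S) \geq 8$ stated for $W_{72}$).

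Applying part (iv) then yields
\[
B_s \leq 4\left\lceil\frac{n-d+2}{d+2}\right\rceil - 2
= 4\left\lceil\frac{60}{16}\right\rceil - 2
= 4 \cdot 4 - 2 = 14,
\]
and combining with $B_s = \alpha \geq 0$ gives $0 \leq \alpha \leq 14$, as required.

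Since this is essentially a one-line application of the main proposition once the parameters are plugged in, there is no substantive obstacle. The only thing one has to be a little careful about is checking the three numerical inequalities that place $(n,d,s) = (72,14,8)$ into case (iv) rather than the neighboring cases (iii) or (v), and confirming that the entry in Table~\ref{Tab:0mod4} labeled (iv) matches this arithmetic. The improvement over the previous bound $\alpha \leq 16$ (coming from Proposition~\ref{prop:BHM}) is exactly the payoff of the refined counting argument via Lemmas~\ref{lem:1} and~\ref{lem:maxab} incorporated into Proposition~\ref{prop:imp}.
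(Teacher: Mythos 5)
Your proof is correct and follows exactly the paper's route: the paper derives this proposition by citing the entry $B_s\le 14$ of Table~\ref{Tab:0mod4}, which is precisely your application of Proposition~\ref{prop:imp}~(iv) to $(n,d,s)=(72,14,8)$ together with the identification $B_s=\alpha$ from $S_{72}$. The arithmetic placing the parameters in case (iv) and the evaluation $4\lceil 60/16\rceil-2=14$ both check out.
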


It is unknown whether there exists an extremal singly even self-dual 
code for any of these cases.

The possible weight enumerators of a singly even
self-dual $[100,50,18]$ code with $s\ge 10$ and the shadow are as follows:
\[
\begin{array}{lll}
W_{100} &=& 1 + (16 \beta + 52250)y^{18} 
+ (1024 \alpha - 64 \beta +972180)y^{20}
+ \cdots, \\
S_{100} &=& 
\alpha y^{10} + (-20 \alpha - \beta) y^{14} + (190 \alpha + 104500 +
18 \beta)y^{18} 
+ \cdots,
\end{array}
\]
respectively, 
where $\alpha,\beta$ are integers with 
$0 \leq \alpha \leq \frac{-1}{20} \beta \leq \frac{5225}{32}$~\cite{DGH}.
Table~\ref{Tab:0mod4} shows the following:

\begin{prop}
If there exists a singly even
self-dual $[100,50,18]$ code with weight enumerator $W_{100}$,
then $0 \le \alpha \le 18$.
\end{prop}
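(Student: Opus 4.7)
The plan is straightforward: identify the free parameter $\alpha$ with the count $B_{10}$ of weight-$10$ vectors in the shadow, and then invoke Proposition~\ref{prop:imp} to bound this count directly.

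First I would read off the coefficient of $y^{10}$ from the displayed expression for $S_{100}$: it is exactly $\alpha$, so $\alpha = B_s$ with $s=10$. Since $B_s$ counts a set of vectors, the lower bound $\alpha\ge 0$ is immediate and requires no further argument.

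Next I would apply Proposition~\ref{prop:imp} with $(n,d,s)=(100,18,10)$. The hypotheses $n\equiv 0\pmod 4$ and $s=d/2+1$ are both satisfied, so the only thing to check is which of the five sub-cases governs these parameters. Computing $4n=400=(d+2)^2$ and $d^2+8d-4=464$, I see that $(d+2)^2\le 4n\le d^2+8d-4$, which puts the parameters in case~(iv). Case~(iv) then yields
\[
B_s \le 4\left\lceil\frac{n-d+2}{d+2}\right\rceil - 2
= 4\left\lceil\frac{84}{20}\right\rceil - 2 = 4\cdot 5 - 2 = 18,
\]
and since $\alpha=B_{10}$, this gives $\alpha\le 18$, completing the argument.

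There is no real obstacle here: all the combinatorial work has already been carried out in Proposition~\ref{prop:imp}, and the present Proposition is essentially a specialization of its case~(iv) bound to the known parametrization of $W_{100}$ and $S_{100}$ from \cite{DGH}. The improvement over Proposition~\ref{prop:BHM}~(iii), which would only give $B_s\le 20$ with $B_s\ne 19$, amounts to eliminating the two remaining values $\alpha\in\{19,20\}$. If any subtlety arises, it is simply in verifying the boundary equality $4n=(d+2)^2$, which legitimately places the parameters in case~(iv) rather than case~(v).
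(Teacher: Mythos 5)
Your argument is correct and takes essentially the same route as the paper: the entry for $(n,d,s)=(100,18,10)$ in Table~\ref{Tab:0mod4} is obtained exactly by the case-(iv) computation you perform ($4n=400=(d+2)^2\le 464=d^2+8d-4$, giving $B_s\le 4\lceil 84/20\rceil-2=18$), and $\alpha=B_{10}\ge 0$ is immediate. The only cosmetic slip is that the weaker comparison bound $B_s\le 20$, $B_s\ne 19$ comes from Proposition~\ref{prop:BHM}~(ii), not (iii).
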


It is unknown whether there exists a singly even self-dual $[100,50,18]$ 
code for any of these cases.

We give more sets of parameters for which
the bound on $B_{s}$ obtained by 
Proposition~\ref{prop:imp}
improves the bound obtained by Proposition~\ref{prop:BHM}:
\[
(n,d,s)=
(108,18,10),
(116,18,10),
(128,22,12).
\]
These bounds are also listed in Table~\ref{Tab:0mod4}.

\bigskip
\noindent
{\bf Acknowledgment.}
This work was supported by JSPS KAKENHI Grant Number 15H03633.


\appendix
\def\thesection{Appendix \Alph{section}}
\section{}\label{appendix}
\begin{verbatim}
HahnPolynomial:=function(v,k,l,x)
  return (Binomial(v,l)-Binomial(v,l-1))*
      &+[ (-1)^i*Binomial(l,i)*Binomial(v+1-l,i)*
          Binomial(k,i)^(-1)*Binomial(v-k,i)^(-1)*
          Binomial(x,i) : i in [0..l] ];
end function;
Qmatrix:=function(v,k)
  return Matrix(Rationals(),k+1,k+1,
  [[HahnPolynomial(v,k,l,x) : l in [0..k] ]: x in [0..k]]);
end function;
boundM:=function(v,ds)
  if v le ds-1 then 
    return 0;
  elif v le ds*2-2 then
    return 1;
  elif v eq ds*2-1 then
    return 2;
  else
  Q:=Qmatrix(v,ds);
  return Min( { 1-Q[1][i+1]/Q[ds][i+1] : i in [0..ds] 
         | Q[ds][i+1] lt 0 } );
  end if;
end function;
res:=function(n,ds)
  bounds:=[ Floor(boundM(v,ds)+boundM(n-v,ds)): 
          v in {0..(n div 2)} ];
  max:=Max(bounds);
  return max;
end function;

X:=[[42,5],[62,7],[70,7],[82,9],[90,9],[98,9]];
[res(x[1],x[2]): x in X] eq [42,48,52,74,76,78];
\end{verbatim}
\end{document}